\newcommand{\mcal}{\mathcal}
\newcommand{\q}{\left\{}
\newcommand{\w}{\right\}}
\newcommand{\re}{\mathbb{R}}
\newcommand{\iz}{\mathbb{Z}}
\newcommand{\nn}{\mathbb{N}}
\newcommand{\ra}{\rightarrow}
\newcommand{\tb}{\textbf}
\newcommand{\ti}{\textit}
\newcommand{\ap}{\alpha}
\newcommand{\bt}{\beta}
\newcommand{\sbs}{\subseteq}
\newcommand{\li}{\langle}
\newcommand{\ri}{\rangle}
\newcommand{\bsk}{\bigskip}
\newcommand{\opn}{\operatorname}
\newcommand{\rank}{\opn{rank}\,}
\newcommand{\card}{\opn{card}\,}
\newcommand{\supp}{\opn{supp}\,}
\newcommand{\ran}{\opn{range}\,}
\newcommand{\av}{\mathcal{V}_{\beta}}
\newcommand{\rf}{\Lambda}
\newcommand{\lp}{\left(}
\newcommand{\rp}{\right)}
\newcommand{\lb}{\left[}
\newcommand{\rb}{\right]}
\newcommand{\vv}{ {\bf v}}
\newcommand{\vx}{ {\bf x}}
\newcommand{\vi}{ {\bf i}}
\newcommand{\vw}{ {\bf w}}
\theoremstyle{plain}
\newtheorem{theorem}{Theorem}[section]
\newtheorem{proposition}[theorem]{Proposition}
\newtheorem{lemma}[theorem]{Lemma}
\theoremstyle{definition}
\newtheorem{definition}[theorem]{Definition}
\newtheorem{example}[theorem]{Example}
\theoremstyle{remark}
\begin{document}

\title{Any  multi-index  sequence has an interpolating measure}

\author[H. Choi]{Hayoung Choi}
\address{Hayoung Choi \\  Department of Mathematics \\ Kyungpook National University \\ Daegu, Republic of Korea 41566}            
             \email{hayoung.choi@knu.ac.kr}
\thanks{The first-named author was supported by Basic Science Research Program through the National Research Foundation of Korea (NRF) funded by the Ministry of Education (20**).}

\author[S. Yoo]{ Seonguk Yoo}
\address{Seonguk Yoo \\  Department of Mathematics Education and RINS \\ Gyeongsang National University \\ Jinju, Republic of Korea 52828}
\email{seyoo@gnu.ac.kr}

\thanks{The second-named author was supported by Basic Science Research Program through the National Research Foundation of Korea (NRF) funded by the Ministry of Education  (2020R1F1A1A01070552).}

\date{8, 20, 2020}

\keywords{moment problem, interpolating measure, consistency, rank-one decomposition}

\subjclass{Primary 47A57, 44A60; Secondary 15-04, 47A20, 32A60}

\begin{abstract} R. P. Boas showed that any single-index sequence $\q \bt_i \w_{i=0}^\infty$ of real numbers can be represented as $\bt_i =\int_0^\infty x^i \, d\mu$ ($i=0,1,2,\ldots$), where $\mu$ is a signed measure.  As Boas said his observation seemed to be quite unexpected; however, it is even possible  to extend the result to any multi-index  sequence of real numbers.   In addition,  we can also prove that any multi-index finite  sequence admits a measure of a similar type.  

\end{abstract}

\maketitle

\section{Introduction}\label{intro}

We first discuss finite sequences and then introduce the result of infinite sequences. Let $\beta \equiv\beta^{(m)}= \q  \beta_{\tb i} \in \re:  \tb i \in \iz^d_+, \ |\tb i| \leq m  \w$, with $\beta_{\bf 0}\neq 0$,  
be  a $d$-dimensional multisequence of  degree $m$.  It is called a \ti{truncated moment sequence}. 
For a closed set $K\sbs \re^d$, the  \ti{truncated $K$-moment problem} (TKMP) entails finding necessary and sufficient conditions for the existence of  a positive Borel measure $\mu$ on $\re^d$ with $\supp \mu \sbs K$ such that
\begin{eqnarray*}
\beta_{\tb i}=\int \tb x^{\tb i} \,\, d\mu(\tb x) \,\,\,(\vi \in \mathbb Z^d_+, \ |\vi|\leq m),
\end{eqnarray*}
where $\vx\equiv (x_1,\ldots,x_d)$, $\vi\equiv(i_1,\ldots,i_d)\in \iz^d_+$, and $\vx^\vi:=x_1^{i_1}\cdots x_d^{i_d}$. 
The measure $\mu$ is said to be a $K$-\textit{representing measure} for $\beta$. For the typical case $K=\re^d$, the problem is referred to as  the \ti{truncated real moment problem} (TRMP) and $\mu$ is called simply a \ti{representing measure}. 

In a similar way, we consider the full moment problem for an infinite sequence 
$\beta \equiv\beta^{(\infty)}= \q  \beta_{\tb i} :  \tb i \in \iz^d_+ \w$. As well known by H. L. Hamburger  for $d=1$, the sequence has a representing measure supported on $\re$ if and only if the Hankel matrice, $[\beta_{i+j}]_{0\leq i\leq k,\  0\leq j\leq k}$ is positive semidefinite (or simply, positive).  Furthermore, T. J.  Stieltjes  showed that  the single-index  sequence has a representing measure supported in $[0,\infty)$ if and only if both Hankel matrices, $[\beta_{i+j}]_{0\leq i\leq k,\  0\leq j\leq k}$ and $[\beta_{i+j+1}]_{0\leq i\leq k,\  0\leq j\leq k}$ for $k\geq 0$, are positive. 


When $m=2n$, we define   a \textit{moment matrix} $M_d(n) $ of $\bt \equiv \beta^{(2n)}$ as 
$$
M_d(n)\equiv {M}_d(n)(\beta):=(\beta_{\textbf{ i} +\textbf{j} })_{\textbf{ i}, \, \textbf{j}\in \mathbb Z^d_+: \ |\textbf{i}|,\, |\textbf{j}|\leq n}.
$$
Some  properties of $M_d(n)$ have been  important factors for the existence of a representing measure for $\bt$; for example, $M_d(n)$ is necessarily positive (obviously the positivity of $M_d(n)$ is sufficient for $d=1$ but  not sufficient for $d\geq 2$ as well known).  
R. Curto and L. Fialkow have established many elegant results for various moment problems based on a positive  extension of $M_d(n)$. They also have used the functional calculus in the column space of $M_d(n)$; to introduce the functional calculus, we label the columns and rows of  $M_d{(n)}$ with monomials  $X^\vi:=X_1^{i_1}\cdots X_d^{i_d}$ in the  degree-lexicographic order. 
Note that each block with the moments of the same order  in ${M}_d(n)$ is Hankel and that  $M_d(n)$ is symmetric.  In  addition, one can define a sesquilinear form: for ${\bf i, j}\in \iz_+^d$, 
$$
\li X^{\vi} , X^{\bf j}\ri_{M_d(n)}:=\li M_d(n) \widehat{ X^{\bf i}},  \widehat{X^{\bf j}}\ri = \beta_{\bf i+ j},
$$
where $ \widehat{ X^\vi }$ is the column vector associated to the monomial ${ X^\vi}$. 

For a motivation of the main result, let us consider the basic  Fibonacci sequence. 
In particular, take the first six moments and write them as a 2-dimensional moment sequence $
\beta:\q \beta_{00}, \beta_{10},\beta_{01},\beta_{20},\beta_{11},\beta_{02} \w $$= \q1, 1, 2, 3, 5, 8\w$. 
Since $M_2(1)(\bt)$ is not positive, $\bt$ does not admit a representing measure. However,  one can find a  formula to express $\bt$,  such as 
$\bt_{ij} = 1\cdot \lp \frac{1}{2}\rp^i (1)^j + \frac{1}{7} \cdot \lp\frac{9}{2} \rp^i (7)^j
-\frac{1}{7} \cdot (1)^i  (0)^j$;  
that is, there is a signed measure $\mu= 1\cdot \delta_{\lp \frac{1}{2}, 1\rp} + \frac{1}{7} \cdot \delta_{\lp\frac{9}{2}, 7 \rp}
-\frac{1}{7} \cdot \delta_{(1,0)}$ for $\bt$ supported in the plane.     
The coefficients in the formula of the measure are called \ti{densities} and the points are \ti{atoms}  of the  measure.  
This example shows that even though a sequence has no representing measure, it may have a {signed} measure so that  some of the densities might be  negative.      
We define such a measure as an \ti{interpolating} measure $\mu$  for  $\bt$ (finite or infinite) as a, not necessarily positive, Borel measure  such that $\beta_{\tb i}=\int \tb x^{\tb i} \,\, d\mu(\tb x),  \ \vi \in \mathbb Z^d_+$.)



Due to the {Jordan decomposition theorem}, every  interpolating    measure $\mu$ has a decomposition, $\mu  = \mu^+ -\mu^-$ of two positive measures $\mu^+$ and $\mu^-$, at least one of which is finite. 
Interpolating measures appear in many  scientific fields.  For example, they are useful to represent electric charge; the moment problem about a signed measure is related to quantum physics as in \cite{FBW}. Furthermore, there is a possibility that Gauss-Jacobi quadratures would be  generalized through  moment sequences with a signed measure (see \cite{KoRe}).


For $d=1$, 
 R. P. Boas 
showed that any single-index ``infinite'' sequence of real numbers admits an interpolating measure supported in $[0,\infty)$; that is, one can always find a  measure for any sequence of the form $\mu  = \mu^+ -\mu^-$  such that  both  $\mu^+$ and $\mu^-$ are positive Borel measures  supported in $[0,\infty]$ \cite{Boa}.  
Moreover, G. Flessas, K. Burton, and R. R. Whitehead  found an algorithm to find such a measure supported in the real line for a ``finite'' real sequence $\q s_j\w_{j=0}^{2n-1}$ \cite{FBW}. 
As a generalization of these results,  we will see that any finite or infinite sequence has an interpolating measure supported in $\re^d$ for any $d\geq 2$. 
Notice that since moment problems about finite sequences are known to be more general than problems about infinite sequences, we need a solution to each of these two problems.


We conclude this section with another application of the moment problem to the numerical integration. For more details, readers can refer to \cite{GoMe}.  

\begin{definition}
A \tb{quadrature} (or \tb{cubature})  rule of size $p$ and precision $m$ is a numerical integration formula which uses $p$ nodes, is exact for all polynomials of degree at most $m$, and fails to recover the integral some polynomial of degree $m+1$.
\end{definition}

\begin{example}\tb{[Gaussian Quadrature; size $n$, precision $2n-1$]} 
We would like to find nodes $t_0, t_1\ldots, t_{n-1}$ satisfying 
\begin{equation}\label{e-gq0}
\int_{-1}^1 f(t) \, dt =\sum_{j=0}^{n-1} \rho_j f\lp t_j\rp 
\end{equation}
for every polynomial  $f  \text{ with }\deg f\leq 2n-1$. 
Now, we consider interpolating equations with polynomials and we get 
\begin{equation}\label{e-gq}
\sum_{j=0}^{n-1} \rho_j t_j^k =\int_{-1}^1 t^k \, dt  =
\begin{cases}
0 &  k=1,3,\ldots,2n-1;\\
\frac{2}{k+1} &k=0,2,\ldots,2n-2. 
\end{cases} 
\end{equation} 
If  $n=2$,  (\ref{e-gq}) becomes the system of polynomial equations 
$$\q
\begin{array}{cl}
\rho_0+\rho_1 &= \ 2;\\
\rho_0 t_0+\rho_1 t_1&=\ 0;\\
\rho_0 t_0^2+\rho_1 t_1^2&=\ 2/3;\\
\rho_0 t_0^3+\rho_1 t_1^3&=\ 0.
\end{array}\right.
$$
The solution is $\rho_0=\rho_1=1$, $t_0=-1/\sqrt{3}$, and $t_1=1/\sqrt{3}$.  Thus we easily see 
\begin{align*}
\int_{-1}^1  \lp  a_0+ a_1 t  \right.&\left. +a_2 t^2+ a_3 t^3 \rp \, dt  \\
&=   a_0 (\rho_0+\rho_1) + a_1 (\rho_0 t_0+\rho_1 t_1 )+ a_2 \lp \rho_0 t_0^2+\rho_1 t_1^2 \rp+ a_3 \lp \rho_0 t_0^3+\rho_1 t_1^3 \rp \\
& =\int_{-1}^1  \lp a_0 + a_1 t +a_2 t^2+ a_3 t^3\rp \, d\mu, 
\end{align*}
where $\mu:=\rho_0 \delta_{t_0}+\rho_1 \delta_{t_1}$. This solution  in numerical analysis textbooks  is usually based on  {Legendre polynomials}. 
With an approach via the truncated moment problem, we can find an alternative solution as follows:  Let $\beta_0:=2, \beta_1:=0, \beta_2:=2/3, \beta_3:=0$ and form a Hankel matrix $H$ with a parameter $\ap$,  
\begin{align*}
H:=\begin{pmatrix}
\beta_0 & \beta_1 & \beta_2\\
\beta_1 & \beta_2 & \beta_3\\
\beta_2 & \beta_3 & \ap \\
\end{pmatrix}
=&\begin{pmatrix}
2 & 0 & 2/3\\
0 & 2/3 & 0\\
2/3 & 0 & \ap \\
\end{pmatrix}
\end{align*}
For the sake of a minimal number of nodes, we want $\rank H=2$; thus, $\ap=2/9$. After labeling the columns in $H$ as $\ti{1}, T, T^2$, the column relation in $H$ can be written as $T^2=(1/3)\ti{1}$. In \cite{CF91}, it is known the roots of the equation $t^2=1/3$  (that is, $t_0=-1/\sqrt{3}$ and $t_1=1/\sqrt{3}$)  are the nodes. We may compute the densities by solving the Vandermonde equation: 
$$
\begin{pmatrix}\
1& 1 \\
 t_0 & t_1\\
 t_0^2 &   t_1^2\\
 t_0^3 &   t_1^3
\end{pmatrix} 
\begin{pmatrix}
\rho_0 \\ \rho_1
\end{pmatrix}
=
\begin{pmatrix}
\beta_0 \\ \beta_1 \\ \beta_2 \\ \beta_3
\end{pmatrix}, 
$$
whose solution is obviously $\rho_0=\rho_1=1.$ 
\end{example}
This method seems to provide an economical way to solve a qudrature problem and we will  see  the main result of this article gives  a technique for more general cases, that is, when a signed measure arises in (\ref{e-gq0}).

\section{The Consistency and Rank-one Decompositions of Moment Matrices}

This Section is designed to introduce some background knowledge for dealing with truncated moment sequences. 

\subsection{The consistency} 
We are about to define an algebraic set associated to $M_d(n)$. Let $\mcal P:=\re[x_1,\ldots,x_d]$ and let $\mcal P_k:=\q p\in \mcal P: \deg p \leq k \w$.  Since we labeled columns in $M_d(n)$ with monomials, a column relation in $M_d(n)$ can be written as $p(\bf X)=\tb 0$ for some $p\in \mcal P_n$. 
Let $\mathcal{Z}(p)$ denote the zero set of a polynomial $p$ and we define the \textit{algebraic variety} $\av$ of $\beta $ or $M_d(n)$  by
\begin{equation} \label{d-variety}
\mathcal{V_\beta} \equiv  \mcal{V}_{M_d(n)}:=\bigcap_{p ({\bf X})=\tb 0 }\; \mathcal{Z}
(p).
\end{equation}

Given $\beta\equiv\beta^{(m)}$, define the \ti{Riesz functional} $\rf\equiv\rf_\bt : \mcal P_m \ra \re$ by 
$\rf\lp \sum a_\vi \vx^\vi \rp:=\sum a_\vi \bt_\vi$. 
We also define a notion which is the key to the main result of this note;  $\bt\equiv\bt^{(2n)}$ or $M_d(n)\equiv M_d(n)(\bt^{(2n)})$ is said to be $V$-\textit{consistent} for a set $V\in \re^d$ if  the following holds:
\begin{equation}\label{cs}
   p\in\mathcal P_{2n},~p|_{  V}\equiv0 \Longrightarrow \Lambda(p)=0.
\end{equation}

This is a property of the moment sequence that guarantees the existence of an interpolating measure. Here is a formal result:
\begin{lemma}\label{r-cons}  \cite[Lemma 2.3]{tcmp11}
Let $L:\mcal P_{2n} \ra \re$ be a linear functional and let $ V\sbs \re^d$. Then the following statements are equivalent:
\begin{enumerate}[(i)]
\item \label{r-cons1} There exist $\ap_1,\ldots,\ap_\ell \in \re$ and there exist ${\bf w}_1, \ldots,{\bf w}_\ell\in V$ such that  for all  $p\in \mcal P_{2n}$
\begin{equation}\label{eq1}
L(p)= \sum_{k=1}^\ell \ap_k p({\bf w}_k) .
\end{equation}

\item \label{r-cons2} If $p\in \mcal P_{2n}$ and $p|_{V}\equiv 0$, then $L(p)=0$.
\end{enumerate}
\end{lemma}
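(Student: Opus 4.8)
The plan is to dispatch $(\ref{r-cons1})\Rightarrow(\ref{r-cons2})$ by direct substitution and to prove the substantive direction $(\ref{r-cons2})\Rightarrow(\ref{r-cons1})$ by a finite-dimensional duality argument. For the first implication: if $L(p)=\sum_{k=1}^{\ell}\ap_k\,p(\vw_k)$ with every $\vw_k\in V$, and $p\in\mcal P_{2n}$ satisfies $p|_V\equiv 0$, then $p(\vw_k)=0$ for each $k$, so $L(p)=0$.

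For the converse, consider the linear subspace $I_V:=\q p\in\mcal P_{2n}:p|_V\equiv 0\w$ of the finite-dimensional space $\mcal P_{2n}$. Condition $(\ref{r-cons2})$ says precisely that $L$ annihilates $I_V$, so $L$ induces a well-defined linear functional $\ol L$ on the quotient $Q:=\mcal P_{2n}/I_V$. Now identify $Q$ with the space $R_V$ of restrictions $p|_V$ ($p\in\mcal P_{2n}$) to $V$: the restriction map $p\mapsto p|_V$ is surjective onto $R_V$ with kernel $I_V$, so $Q\cong R_V$ is a finite-dimensional space of real-valued functions on $V$, and every point $\vw\in V$ furnishes an evaluation functional $\varepsilon_\vw\in Q^{\ast}$, $\varepsilon_\vw(p+I_V):=p(\vw)$.

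The heart of the argument is to show that $\q\varepsilon_\vw:\vw\in V\w$ spans all of $Q^{\ast}$. If $f=p+I_V\in Q$ lies in $\ker\varepsilon_\vw$ for every $\vw\in V$, then $p(\vw)=0$ for all $\vw\in V$, i.e.\ $p\in I_V$, so $f=0$; thus no nonzero element of $Q$ is killed by all the $\varepsilon_\vw$, and finite-dimensionality of $Q$ forces their linear span to be all of $Q^{\ast}$. Pick a basis $\varepsilon_{\vw_1},\ldots,\varepsilon_{\vw_\ell}$ of $Q^{\ast}$ from among these evaluations (so $\ell=\dim Q=\dim R_V\le\dim\mcal P_{2n}$), write $\ol L=\sum_{k=1}^{\ell}\ap_k\,\varepsilon_{\vw_k}$ for suitable $\ap_k\in\re$, and lift back to $\mcal P_{2n}$: this yields $L(p)=\sum_{k=1}^{\ell}\ap_k\,p(\vw_k)$ for all $p\in\mcal P_{2n}$, which is $(\ref{r-cons1})$.

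The one point I would handle with care is the spanning claim for the evaluation functionals; this is exactly where finite-dimensionality of $\mcal P_{2n}$ is indispensable (the analogous statement is false for general function spaces) and where one must use that $I_V$ is defined by vanishing on all of $V$. No topology on $V$, and no closedness hypothesis, is needed; the remainder is routine linear algebra, and the degenerate case $V=\es$ (whence $L\equiv0$ and $\ell=0$, an empty sum) is subsumed automatically.
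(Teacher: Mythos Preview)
Your argument is correct. The forward implication is indeed immediate, and for the converse your quotient/duality argument is the standard and cleanest route: the identity $\bigcap_{\vw\in V}\ker\varepsilon_\vw=\{0\}$ in $Q$ together with $\dim Q<\infty$ forces $\opn{span}\{\varepsilon_\vw\}=Q^{\ast}$, after which $\ol L$ is a finite combination of evaluations and one lifts back. Your treatment of the spanning step and the edge case $V=\es$ is accurate.

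Note, however, that the paper does not supply its own proof of this lemma: it is quoted verbatim from \cite[Lemma~2.3]{tcmp11} and stated without argument. So there is nothing in the present paper to compare your approach against. Your proof is precisely the finite-dimensional linear-algebra argument one would expect for this result; as a bonus it records the bound $\ell\le\dim\mcal P_{2n}$ on the number of atoms, which the paper's later discussion uses implicitly.
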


If $L$ is the Riesz functional of the moment sequence $\beta$, then Lemma \ref{r-cons}(\ref{r-cons2}) is just as   the $\av$-consistency condition of $\bt$ and $ \sum_{k=1}^\ell \ap_k \delta_{\vw_k}$ is an interpolating measure for $\bt$.   
While it seems like Lemma \ref{r-cons} gives a concrete solution for $\bt$ to have an interpolating measure,   
we should indicate that checking the consistency is a highly nontrivial process.  To show that $\bt$ is $V$-consistent, it is essential (but, difficult) to find a representation of  all the polynomials  vanishing on $V$.

For $M_d(n)$ to have a (positive) representing measure, $\bt$ must be $\av$-consistent; in the \ti{extremal} cases (that is, $\rank M_d(n)=\card \av$), it is known that  $M_d(n)(\bt)$ is consistent if and only if $\bt$ admits a unique $\rank M_d(n)$-atomic representing measure whose support is exactly $\av$\cite{tcmp11}.

In particular, when a positive $M_d(n)$ is invertible,  we know $\av=\re^d$ and the only polynomial vanishing on $\re^d$ is the zero polynomial. Thus, $M_d(n)$  is naturally consistent and has an interpolating measure.

\subsection{Rank-one decompositions}


After rearranging the terms in (\ref{eq1}) by the sign of densities,  we write a  measure $\mu$ for a consistent $M_d(n)$ as
\begin{equation}\label{eq2}
\mu= \sum_{k=1}^{s} \ap_k \delta_{{\bf w}_k} - \sum_{k=s+1}^{\ell} \ap_k \delta_{{\bf w}_k},
\end{equation}
where $\ap_k>0$ for all $k=1,\ldots,\ell$; we denote the first summand in (\ref{eq2}) as $\mu^+$ and the second as $\mu^-$. 
Due to this fact,  a bound of the cardinality of the support of an interpolating measure is established:

\begin{proposition} A minimal interpolating measure for a consistent $M_d(n)$ is at most  $(2n+1)(2n+2)$-atomic. 
\end{proposition}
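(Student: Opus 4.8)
The plan is to manufacture one interpolating measure from the consistency hypothesis, split it into its positive and negative parts as in~(\ref{eq2}), and then prune each part separately by the classical Carath\'eodory dimension count for positive atomic measures.

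First I would invoke Lemma~\ref{r-cons} with $L=\rf_\bt$. Since $M_d(n)$ is consistent, that is, $\av$-consistent, statement~(\ref{r-cons2}) holds for $L=\rf_\bt$ and $V=\av$, so~(\ref{r-cons1}) provides $\ap_1,\dots,\ap_\ell\in\re$ and ${\bf w}_1,\dots,{\bf w}_\ell\in\av$ with $\rf_\bt(p)=\sum_{k=1}^{\ell}\ap_k\,p({\bf w}_k)$ for every $p\in\mcal P_{2n}$. Rearranging the terms by the signs of the $\ap_k$ as in~(\ref{eq2}) yields an interpolating measure $\mu=\mu^+-\mu^-$ in which $\mu^+$ and $\mu^-$ are positive, finitely atomic, and supported inside $\av$.

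Next I would shrink $\mu^+$ (the treatment of $\mu^-$ being identical). Put $N:=\dim\mcal P_{2n}$. If $\mu^+=\sum_{k=1}^{s}\ap_k\delta_{{\bf w}_k}$ has $s>N$, then the vectors $({\bf w}_k^{\vi})_{|\vi|\le 2n}\in\re^{N}$, $k=1,\dots,s$, are linearly dependent, so there are reals $c_1,\dots,c_s$, not all zero and, after replacing $(c_k)$ by $(-c_k)$ if necessary, with at least one positive, such that $\sum_{k=1}^{s}c_k\,{\bf w}_k^{\vi}=0$ for all $|\vi|\le 2n$. Setting $t^\ast:=\min\{\ap_k/c_k:\ c_k>0\}$, attained at an index $k_0$, the measure $\sum_{k=1}^{s}(\ap_k-t^\ast c_k)\delta_{{\bf w}_k}$ is again positive, has vanishing density at $k_0$, is supported in $\{{\bf w}_1,\dots,{\bf w}_s\}\subseteq\av$, and has the same moments of order $\le 2n$ as $\mu^+$. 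Iterating reduces $\mu^+$ to at most $N$ atoms, and likewise $\mu^-$.

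Finally, because each pruning step preserves every moment of order $\le 2n$, the resulting signed measure still represents $\bt$, is supported in $\av$, and carries at most $2N$ atoms; hence a minimal interpolating measure for $\bt$ has at most $2N=2\dim\mcal P_{2n}$ atoms, which for $d=2$ is exactly $2\binom{2n+2}{2}=(2n+1)(2n+2)$. I expect the only point that needs genuine care to be the pruning step, namely checking that each deletion keeps the part positive, fixes the degree-$2n$ data, and keeps the support inside $\av$; all three hold at once because a step merely removes one atom while the chosen linear dependence annihilates $\mcal P_{2n}$. Everything else is bookkeeping around Lemma~\ref{r-cons} and the standard Carath\'eodory/Tchakaloff reduction for positive measures.
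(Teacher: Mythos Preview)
Your proof is correct and follows the paper's route: split an interpolating measure into $\mu^+-\mu^-$ and bound each positive part by $\dim\mcal P_{2n}$ atoms, giving $2\dim\mcal P_{2n}=(2n+1)(2n+2)$ for $d=2$. The only difference is that the paper invokes the Bayer--Teichmann theorem \cite{BaTe} as a black box for the reduction step, whereas you carry out the Carath\'eodory deletion explicitly; this makes your argument self-contained and, as you note, also shows the atoms can be kept inside $\av$.
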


\begin{proof}
If  $M_d(n)$ is consistent with a measure $\mu  = \mu^+ -\mu^-$ of two positive finitely atomic measures $\mu^+$ and $\mu^-$,  we may write $M_d(n)=M[\mu^+]-M[\mu^-]$, where each term  is a moment matrix generated by the corresponding measure of the same size as $M_d(n)$. 
A result \cite[Theorem 2]{BaTe} by C. Bayer and J. Teichmann showed that the cardinality of the support of a positive  measure is at most $\dim \mcal P_{2n}$ in the presence of a representing measure for a moment matrix associated to a moment sequence of degree $2n$.

Since $M[\mu^+]$  and $M[\mu^-]$ have a positive measure, it follows that a minimal measure for each moment matrix is at most $\dim \mcal P_{2n}$-atomic.  
Therefore, we conclude that the cardinality of a minimal interpolating measure is at most $2 (\dim \mcal P_{2n})=(2n+1)(2n+2)$.
\end{proof}

Many solutions of TRMP for a positive measure depend on finding a positive moment matrix extension of $M_d(n)$. However, this approach needs to allow new parameters and constructing an extension is not handy for most cases when $n\geq 3$. Alternatively, R. Curto and the second-author recently have used a decomposition of $M_d(n)$ for the study of TRMP.  To introduce the decomposition, we now  define some notations: Let ${\bf w}=(w_1, \ldots, w_d)\in \re^d$ and let   
\begin{enumerate} [(i)] 
\item ${\bf v}({\bf w}):=
\begin{pmatrix}
1\hspace{1.5mm} w_1\hspace{1.5mm}  \cdots\hspace{1.5mm}  w_d\hspace{1.5mm}   w_1^2\hspace{1.5mm}  w_1 w_2\hspace{1.5mm}   w_1 w_3\hspace{1.5mm}   \cdots\hspace{1.5mm}  w_{d-1} w_d \hspace{1.5mm}  w_d^2\hspace{1.5mm}  \cdots\hspace{1.5mm}   w_1^n\hspace{1.5mm}  \cdots  \hspace{1.5mm}   w_d^n
\end{pmatrix}$, 
which is a row vector corresponding to the monomials ${\bf w}^{\vi}$ in the degree-lexicographic order. 

\item $P({\bf w}):={\bf v}({\bf w})^T {\bf v}({\bf w})$, which is indeed the rank-one moment matrix generated by the measure $\delta_{\bf w}$. 
\end{enumerate}
For example, if $d=n=2$ and ${\bf w}=(a,b)$, then 
$$
P({\bf w})=\left(
\begin{array}{cccccc}
 1 & a & b & a^2 & a b & b^2 \\
 a & a^2 & a b & a^3 & a^2 b & a b^2 \\
 b & a b & b^2 & a^2 b & a b^2 & b^3 \\
 a^2 & a^3 & a^2 b & a^4 & a^3 b & a^2 b^2 \\
 a b & a^2 b & a b^2 & a^3 b & a^2 b^2 & a b^3 \\
 b^2 & a b^2 & b^3 & a^2 b^2 & a b^3 & b^4 \\
\end{array}
\right).
$$
Thus, if $M_d(n)$ has an interpolating measure $\mu$ supported in a set $\q {\bf w}_1, \ldots, {\bf w}_\ell \w$, then one should be able to write $M_d(n)=\sum_{k=1}^\ell d_k P({\bf w}_k)$ for some $d_1,\ldots,d_\ell\in \re\setminus \q 0 \w$.

\section {Main Result}

We will verify that any truncated moment matrix turns out to be $\re^d$-consistent after applying proper perturbations, and so it admits an interpolating measure.  To prove the main result, we begin with auxiliary results:

\begin{lemma} \cite{HoJo}\label{r-HJ1}
Assume $A$ and $B$ are matrices of the same size. Then 
$\rank(A+B) =\rank A + \rank B$ if and only if $\ran A \cap \ran B=\q \bf 0\w$ and $\ran A^T \cap \ran B^T=\q \bf 0\w$.
\end{lemma}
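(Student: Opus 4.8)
The plan is to derive both implications from one elementary fact about ranges of sums, combined with the observation that the transpose condition can be rephrased in terms of kernels. For any two matrices $A,B$ of the same size we have $\ran(A+B)\sbs\ran A+\ran B$, and by the dimension formula $\dim(\ran A+\ran B)=\rank A+\rank B-\dim(\ran A\cap\ran B)$; hence
\begin{equation}\label{e-ranksub}
\rank(A+B)\leq\rank A+\rank B-\dim(\ran A\cap\ran B).
\end{equation}

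For the ``only if'' direction, suppose $\rank(A+B)=\rank A+\rank B$. Then (\ref{e-ranksub}) forces $\dim(\ran A\cap\ran B)=0$, i.e.\ $\ran A\cap\ran B=\{\mathbf 0\}$. Since $\rank M=\rank M^T$ for $M=A,B,A+B$ and $(A+B)^T=A^T+B^T$, the same hypothesis reads $\rank(A^T+B^T)=\rank A^T+\rank B^T$, and the identical argument applied to $A^T,B^T$ gives $\ran A^T\cap\ran B^T=\{\mathbf 0\}$.

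For the converse, assume $\ran A\cap\ran B=\{\mathbf 0\}$ and $\ran A^T\cap\ran B^T=\{\mathbf 0\}$. The key step is to reinterpret the second hypothesis: writing $\re^n$ for the common column space of $A$ and $B$ and using $\ran M^T=(\ker M)^\perp$, the condition $\ran A^T\cap\ran B^T=\{\mathbf 0\}$ is equivalent, upon taking orthogonal complements, to $\ker A+\ker B=\re^n$. Granting this, I would show $\ran A+\ran B\sbs\ran(A+B)$: given $v=Ax\in\ran A$, split $x=x_1+x_2$ with $x_1\in\ker A$ and $x_2\in\ker B$, so that $v=Ax_2=(A+B)x_2\in\ran(A+B)$, and the symmetric argument places every element of $\ran B$ in $\ran(A+B)$. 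Combined with the reverse inclusion already used to obtain (\ref{e-ranksub}), this yields $\ran(A+B)=\ran A+\ran B$, and since $\ran A\cap\ran B=\{\mathbf 0\}$ the dimension formula now gives $\rank(A+B)=\rank A+\rank B$.

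The only step that is not pure bookkeeping --- and the one I would flag as the heart of the argument --- is the translation of the row-space hypothesis $\ran A^T\cap\ran B^T=\{\mathbf 0\}$ into the decomposition $\re^n=\ker A+\ker B$ that powers the splitting $x=x_1+x_2$; everything else is the standard rank inequality and the dimension formula for a sum of subspaces. I also note that no symmetry or positivity of $A$ and $B$ is invoked, so the lemma applies verbatim to the non-symmetric rank-one blocks $P(\mathbf{w})$ used in the main proof.
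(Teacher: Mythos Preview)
Your argument is correct in both directions; the passage from $\ran A^T\cap\ran B^T=\{\mathbf 0\}$ to $\ker A+\ker B=\re^n$ via $(\ker M)^\perp=\ran M^T$ is exactly the right move, and the splitting $x=x_1+x_2$ then cleanly gives $\ran A+\ran B\sbs\ran(A+B)$. The paper itself supplies no proof of this lemma --- it is merely quoted from Horn and Johnson --- so there is nothing to compare your approach against.

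One small inaccuracy in your closing commentary: the matrices $P(\mathbf w)=\mathbf v(\mathbf w)^T\mathbf v(\mathbf w)$ are in fact symmetric (indeed positive semidefinite of rank one), not non-symmetric, and the paper applies the Hermitian special case (Lemma~\ref{r-ran}) rather than Lemma~\ref{r-HJ1} itself in the proof of Theorem~\ref{r-main0}. This does not affect the validity of your proof.
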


As a special case of Lemma \ref{r-HJ1}, one can easily prove:

\begin{lemma}\label{r-ran} 
Assume $A$ and $B$ are Hermitian matrices of the same size and $\rank B=1$. Then 
$\rank(A+B) =1 + \rank A$ if and only if $\ran A \cap \ran B=\q \bf 0\w$.
\end{lemma}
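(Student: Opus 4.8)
The plan is to derive Lemma \ref{r-ran} directly from Lemma \ref{r-HJ1} by specializing to the case where $B$ is Hermitian of rank one, and observing that for such matrices the two conditions in Lemma \ref{r-HJ1} collapse to a single one. First I would note that since $A$ and $B$ are Hermitian, we have $A^T = \bar A$ and $B^T = \bar B$; more to the point, $\ran A^T = \ran \bar A = \overline{\ran A}$ (the entrywise conjugate of the subspace), and likewise $\ran B^T = \overline{\ran B}$. So the second intersection condition $\ran A^T \cap \ran B^T = \{\mathbf 0\}$ is just the complex conjugate of the first condition $\ran A \cap \ran B = \{\mathbf 0\}$, and a subspace intersection is trivial if and only if its conjugate is trivial. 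Hence for Hermitian $A, B$ the two hypotheses of Lemma \ref{r-HJ1} are equivalent to each other, and Lemma \ref{r-HJ1} reduces to: $\rank(A+B) = \rank A + \rank B \iff \ran A \cap \ran B = \{\mathbf 0\}$.

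Next I would substitute $\rank B = 1$. Then $\rank A + \rank B = \rank A + 1$, and the equivalence from the previous paragraph reads exactly $\rank(A+B) = 1 + \rank A \iff \ran A \cap \ran B = \{\mathbf 0\}$, which is the claim. This is essentially a one-line deduction once the conjugation observation is in place, so the writeup is short.

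The only point requiring a little care — and the step I would flag as the main (minor) obstacle — is the handling of the transpose-versus-conjugate-transpose distinction. The paper uses $A^T$ in Lemma \ref{r-HJ1} (which is the correct statement over $\re$ or when one reads $A^T$ as the conjugate transpose), whereas ``Hermitian'' naturally pairs with the conjugate transpose $A^*$. If one works over $\re$, Hermitian just means symmetric, $A^T = A$, $B^T = B$, and the second condition $\ran A^T \cap \ran B^T = \{\mathbf 0\}$ is literally identical to the first, so there is nothing to prove beyond quoting Lemma \ref{r-HJ1}. Over $\cn$ one uses the conjugation remark above. Since the application in this paper is to real moment matrices, I would simply state the proof in the real-symmetric setting: assuming $A = A^T$ and $B = B^T$, the condition $\ran A^T \cap \ran B^T = \{\mathbf 0\}$ coincides with $\ran A \cap \ran B = \{\mathbf 0\}$, so Lemma \ref{r-HJ1} yields the equivalence, and plugging in $\rank B = 1$ finishes it.

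\begin{proof}
Since $A$ and $B$ are Hermitian, $\ran A^T$ and $\ran B^T$ are the entrywise conjugates of $\ran A$ and $\ran B$, respectively; hence $\ran A^T \cap \ran B^T = \{\mathbf 0\}$ if and only if $\ran A \cap \ran B=\{\mathbf 0\}$. Consequently, by Lemma \ref{r-HJ1}, $\rank(A+B) = \rank A + \rank B$ if and only if $\ran A \cap \ran B = \{\mathbf 0\}$. Since $\rank B = 1$, this is precisely the assertion that $\rank(A+B) = 1 + \rank A$ if and only if $\ran A \cap \ran B = \{\mathbf 0\}$.
\end{proof}
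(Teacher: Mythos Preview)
Your proof is correct and matches the paper's approach: the paper simply states that Lemma \ref{r-ran} is a special case of Lemma \ref{r-HJ1} and omits the details, and your argument supplies exactly those details by observing that for Hermitian (in the real case, symmetric) $A$ and $B$ the two range conditions in Lemma \ref{r-HJ1} coincide, then specializing to $\rank B=1$.
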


We are ready to introduce a crucial lemma: 

\begin{lemma}\label{r-rann}
 A point $ \vw$ is in $\mcal V_{M_d(n)}$ if and only if the vector $\vv (\vw)$ is in $\ran M_d(n)$. 
\end{lemma}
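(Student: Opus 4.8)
The plan is to prove the two implications separately, exploiting the rank-one decomposition $P(\vw) = \vv(\vw)^T \vv(\vw)$ and the sesquilinear form structure on the column space of $M_d(n)$.

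\textbf{($\Leftarrow$)} Suppose $\vv(\vw) \in \ran M_d(n)$. Since $M_d(n)$ is symmetric, $\ran M_d(n) = (\ker M_d(n))^\perp$. Thus for any polynomial $p \in \mcal P_n$ giving a column relation $p(\mathbf X) = \mathbf 0$ in $M_d(n)$ --- equivalently, $\widehat{p} \in \ker M_d(n)$, where $\widehat{p}$ is the coefficient vector --- we have $\li \vv(\vw), \widehat{p} \ri = 0$. But $\li \vv(\vw), \widehat{p}\ri$ is exactly $p(\vw)$, since $\vv(\vw)$ is the vector of monomials evaluated at $\vw$. Hence $p(\vw) = 0$ for every polynomial defining a column relation, so $\vw$ lies in the intersection defining $\mcal V_{M_d(n)}$ by (\ref{d-variety}).

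\textbf{($\Rightarrow$)} Conversely, suppose $\vw \in \mcal V_{M_d(n)}$, i.e. $p(\vw) = 0$ for every $p \in \mcal P_n$ with $p(\mathbf X) = \mathbf 0$ a column relation of $M_d(n)$. I want $\vv(\vw) \in \ran M_d(n) = (\ker M_d(n))^\perp$, so it suffices to show $\vv(\vw) \perp \widehat{q}$ for every $\widehat{q} \in \ker M_d(n)$. The key observation is that $\widehat{q} \in \ker M_d(n)$ means $M_d(n)\widehat{q} = \mathbf 0$, which says precisely that the polynomial $q$ is a column relation (in Curto--Fialkow's functional-calculus language, $q(\mathbf X) = \mathbf 0$ in $\ran M_d(n)$); this uses symmetry of $M_d(n)$ so that the row space and column space coincide. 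Then by hypothesis $q(\vw) = 0$, and again $q(\vw) = \li \vv(\vw), \widehat{q}\ri$, giving $\vv(\vw) \perp \ker M_d(n)$ as desired.

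The only subtlety --- and the step to state carefully --- is the identification between \emph{column relations} of $M_d(n)$ (as used in the definition (\ref{d-variety}) of $\mcal V_\beta$) and vectors in $\ker M_d(n)$. A priori a ``column relation'' might be read as a dependency among columns, i.e. $\widehat{q} \in \ker M_d(n)$ directly; but one must make sure the definition of $\mcal V_\beta$ ranges over \emph{all} such $q$, not merely a generating set, and that $\li \vv(\vw), \widehat{q}\ri = q(\vw)$ holds verbatim from the construction of $\vv(\vw)$ in the degree-lexicographic basis. Once this dictionary is in place, both directions are immediate from $\ran M_d(n) = (\ker M_d(n))^\perp$. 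I expect no real obstacle here; this lemma is essentially a reformulation of the classical Curto--Fialkow fact that $\mcal V_\beta$ consists of the common zeros of the kernel polynomials, recast through the row vectors $\vv(\vw)$ that will later feed the rank-one decomposition $M_d(n) = \sum_k d_k P(\vw_k)$.
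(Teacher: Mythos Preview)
Your proof is correct and follows essentially the same approach as the paper: both arguments hinge on the identity $p(\vw) = \li \vv(\vw), \widehat{p}\ri$, the identification of column relations with $\ker M_d(n)$, and the symmetry of $M_d(n)$ giving $\ran M_d(n) = (\ker M_d(n))^\perp$. The paper packages this as a single chain of equivalences rather than two separate implications, but the content is identical; your extra care about whether $\mcal V_\beta$ is defined via a generating set or all kernel polynomials is a welcome clarification that the paper handles tacitly via the remark $\opn{span}\,\{\widehat{p_k}\} = \ker M_d(n)$.
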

\begin{proof}
Assume that $\q p_k({\bf X})\equiv \sum a_{\vi}^{(k)} {\bf X}^\vi \w_{k=1}^\ell$ is the set of  polynomials obtained from  column relations in $M_d(n)$. Note that $\opn{span}\, \left\{ \widehat {p_k}\right\}_{k=1}^\ell =\ker M_d(n)$. Now observe:   
\begin{eqnarray*}
\vw \in \mcal V_{M_d(n)} 
&\iff& p_k(\vw)= {\bf 0} \qquad \, \hspace{6mm} \text{for } k=1,\ldots,\ell \\
&\iff&  \sum  a_{\vi}^{(k)} \vw^\vi= {\bf 0}  \hspace{0.2mm}\qquad \text{for } k=1,\ldots,\ell \\
&\iff& \li \widehat{p_k},\vv(\vw) \ri =0 \hspace{7.3mm}\text{for } k=1,\ldots,\ell \\
&\iff& \widehat{p_k} \perp \vv(\vw) \qquad \, \hspace{6mm}\text{for } k=1,\ldots,\ell \\
&\iff& \vv(\vw)\in (\ker M_d(n))^\perp = \ran M_d(n).
\end{eqnarray*}
\end{proof}

\begin{theorem}\label{r-main0}
Any truncated moment sequence $\bt\equiv\beta^{(2n)}$ of degree $2n$ has an interpolating  measure in $\re^d$ for any positive $d\in \iz_+$. 
\end{theorem}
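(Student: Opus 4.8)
The plan is to show that after a suitable finite-rank perturbation, the moment matrix $M_d(n)(\beta)$ becomes invertible (hence positive-rank, in fact full-rank), and then to use Lemma \ref{r-rann} together with Lemma \ref{r-cons} to produce the interpolating measure. The intuition is this: if $M_d(n)$ is invertible, then $\ran M_d(n) = \re^{\dim \mcal P_n}$, so by Lemma \ref{r-rann} \emph{every} point $\vw \in \re^d$ satisfies $\vv(\vw) \in \ran M_d(n)$, i.e. $\mcal V_{M_d(n)} = \re^d$; equivalently the only polynomial giving a column relation is the zero polynomial, so $\beta$ is trivially $\re^d$-consistent. Then Lemma \ref{r-cons} (applied with $L = \rf_\beta$ and $V = \re^d$, where condition (ii) holds vacuously) hands us scalars $\ap_k$ and points $\vw_k$ with $\rf_\beta(p) = \sum_k \ap_k p(\vw_k)$ for all $p \in \mcal P_{2n}$, which says precisely that $\mu = \sum_k \ap_k \delta_{\vw_k}$ is an interpolating measure. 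So the whole theorem reduces to: can we always realize $\beta$ itself (not just some perturbation) with an invertible moment matrix?

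The resolution of that is a telescoping/decomposition argument. First I would write $M_d(n)(\beta) = M_d(n)(\beta) - M_d(n)(\gamma) + M_d(n)(\gamma)$ where $\gamma$ is chosen so that $M_d(n)(\gamma)$ is an invertible moment matrix that already \emph{has} a (signed, finitely atomic) interpolating measure — for instance the moment matrix of a sum of sufficiently many generic point masses, or simply any invertible $P(\vw_1) + \cdots$ combination, which by the rank-one decomposition discussion in Section 2.2 is automatically consistent and interpolable. The point is that $M_d(n)(\beta) - M_d(n)(\gamma) = M_d(n)(\beta - \gamma)$ is again a moment matrix (of the difference sequence), and one shows by induction on the rank, using Lemma \ref{r-ran}, that \emph{any} symmetric moment matrix can be written as a real linear combination $\sum_k d_k P(\vw_k)$ of rank-one matrices $P(\vw_k) = \vv(\vw_k)^T\vv(\vw_k)$: pick a point $\vw$ whose $\vv(\vw)$ is not in the range of the current remainder, subtract an appropriate scalar multiple of $P(\vw)$ to kill one dimension of the range (or alternatively, argue that the matrices $P(\vw)$ as $\vw$ ranges over $\re^d$ span the whole space of moment matrices, because a moment functional annihilating all of them would annihilate $\vv(\vw)^T\vv(\vw)$ for every $\vw$, forcing the associated polynomial to vanish identically). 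Once $M_d(n)(\beta) = \sum_k d_k P(\vw_k)$, the measure $\sum_k d_k \delta_{\vw_k}$ reproduces all moments $\beta_\vi$ with $|\vi| \le 2n$, because the $(\vi,\vj)$ entry of $P(\vw_k)$ is $\vw_k^{\vi+\vj}$ and these entries exhaust all monomials of degree $\le 2n$.

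In fact, phrased this last way, one does not even need the invertibility detour: it suffices to prove directly that the span of $\{P(\vw) : \vw \in \re^d\}$ is the entire space of (symmetric, Hankel-blocked) moment matrices of size $\dim \mcal P_n$, and then $M_d(n)(\beta)$ lies in that span, giving the atomic interpolating measure immediately. I would present the invertibility formulation as motivation (it connects cleanly to the consistency machinery and to Lemma \ref{r-rann} which was just proved) and the spanning formulation as the clean proof.

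The main obstacle is the spanning/decomposition step — establishing that every moment matrix is a real linear combination of the rank-one matrices $P(\vw)$. The subtlety is that moment matrices form a proper subspace of all symmetric matrices (they must have the Hankel-block structure), so one cannot just invoke that rank-one symmetric matrices span all symmetric matrices; one must check that the linear span of $\{P(\vw)\}$ already lands inside, and fills up, exactly the moment-matrix subspace. The cleanest route is the duality argument: a linear functional on matrices that kills every $P(\vw)$ corresponds, on the moment-matrix subspace, to a polynomial $q$ of degree $\le 2n$ with $q(\vw) = 0$ for all $\vw \in \re^d$ (since $\langle (\text{functional}), P(\vw)\rangle$ is a polynomial in $\vw$), hence $q \equiv 0$; therefore no nonzero functional annihilates all $P(\vw)$ within that subspace, so they span it. One must be a little careful about allowing negative coefficients $d_k$ and about the number of atoms (controlled by $\dim \mcal P_n$, and ultimately by the earlier proposition's bound), but none of that is a real difficulty.
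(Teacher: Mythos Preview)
Your final route---the duality/spanning argument showing that $\{P(\vw):\vw\in\re^d\}$ spans the space of moment matrices---is correct and yields the theorem. It is, however, genuinely different from the paper's proof. The paper stays with the ``invertibility $\Rightarrow$ consistency'' thread you set up in your first paragraph and finishes it directly: starting from $M_d(n)(\beta)$, it repeatedly picks $\vw_k$ \emph{outside} the current algebraic variety (so $\vv(\vw_k)\notin\ran$ of the current matrix by Lemma~\ref{r-rann}) and \emph{adds} $P(\vw_k)$; Lemma~\ref{r-ran} then guarantees the rank goes up by one each time, until the sum $\widetilde M=M_d(n)(\beta)+\sum_k P(\vw_k)$ is invertible. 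An interpolating measure $\tilde\mu$ for $\widetilde M$ exists by consistency, and $\tilde\mu-\sum_k\delta_{\vw_k}$ interpolates $\beta$. So the paper's argument is constructive and makes essential use of Lemmas~\ref{r-ran} and~\ref{r-rann}, whereas your duality argument bypasses both lemmas with a clean linear-algebra one-liner at the cost of being nonconstructive.

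Two small remarks on your intermediate suggestions. First, the telescoping with a separate invertible $M_d(n)(\gamma)$ is redundant: once you can decompose an arbitrary moment matrix as $\sum d_kP(\vw_k)$, apply it to $\beta$ directly. Second, your ``induction on the rank'' variant---pick $\vw$ with $\vv(\vw)\notin\ran$ of the remainder and subtract a multiple of $P(\vw)$ to kill a dimension---does not work as stated: by Lemma~\ref{r-ran}, adding or subtracting any nonzero multiple of such a $P(\vw)$ \emph{increases} the rank by one rather than decreasing it. The paper exploits exactly this rank increase, but in the additive direction. Your parenthetical alternative (the duality argument) is the one that actually carries your proof.
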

\begin{proof}

Pick a point $\vw_1\in \re^d\setminus\av$. 
Then we know from Lemma \ref{r-rann} that $\vv(\vw_1) \not\in \ran M_d(n)(\bt)$. 
Since $\ran P(\vw_1) = \q \ap \vw_1 : \ap \in \re\w$, it holds that $\ran M_d(n)(\bt) \cap \ran P(\vw_1) = \q \tb 0\w$. 
Therefore, it follows  from Lemma \ref{r-ran}  that 
$\rank (M_d(n)(\bt)+P(\vw_1))=1+\rank M_d(n)(\bt)$. 
Next, choose a point $\vw_2$ which not in the algebraic variety of $M_d(n)(\bt)+P(\vw_1)$ and we know from the same argument that $\rank (M_d(n)(\bt)+P(\vw_1)+P(\vw_2))=2+\rank M_d(n)(\bt)$. 
Keep this process until we obtain an invertible matrix $\widetilde M:=M_d(n)(\bt)+ \sum_{k=1}^\ell P(\vw_k)$ for some $\ell$.  
$\widetilde M$ is naturally consistent, and so it admits an interpolating measure, say $\tilde \mu$. 
Thus, $M_d(n)(\bt)$ has an interpolating measure of the form  $\tilde \mu -  \sum_{k=1}^\ell \delta_{\vw_k}$.

\end{proof}

\begin{theorem} \label{r-main1}
Any  finite sequence has an interpolating  measure. 
\end{theorem}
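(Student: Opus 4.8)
The plan is to reduce Theorem \ref{r-main1} to Theorem \ref{r-main0} by a padding argument. A general finite sequence $\beta \equiv \beta^{(m)} = \{\beta_{\bf i} : {\bf i}\in\iz_+^d,\ |{\bf i}|\le m\}$ need not have even degree, but an interpolating measure for $\beta$ is the same thing as a (not necessarily positive) Borel measure $\mu$ satisfying $\beta_{\bf i} = \int {\bf x}^{\bf i}\,d\mu$ for all $|{\bf i}|\le m$. So first I would choose $n$ with $2n \ge m$ (e.g. $n = \lceil m/2\rceil$) and extend $\beta^{(m)}$ to a degree-$2n$ sequence $\widetilde\beta \equiv \widetilde\beta^{(2n)}$ by assigning \emph{arbitrary} real values to the ``new'' moments $\beta_{\bf i}$ with $m < |{\bf i}| \le 2n$ (for instance all zero, but any choice works). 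The only constraint needed to invoke Theorem \ref{r-main0} directly is $\beta_{\bf 0}\ne 0$, which is inherited from $\beta$, so $\widetilde\beta$ is a bona fide truncated moment sequence of degree $2n$.

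Next I would apply Theorem \ref{r-main0} to $\widetilde\beta^{(2n)}$: it produces an interpolating measure $\widetilde\mu$ in $\re^d$, i.e. a signed (finitely atomic) measure with $\int {\bf x}^{\bf i}\,d\widetilde\mu = \widetilde\beta_{\bf i}$ for \emph{all} ${\bf i}$ with $|{\bf i}|\le 2n$. Restricting attention to the multi-indices with $|{\bf i}|\le m$, we get $\int {\bf x}^{\bf i}\,d\widetilde\mu = \widetilde\beta_{\bf i} = \beta_{\bf i}$, so $\widetilde\mu$ is already an interpolating measure for the original finite sequence $\beta^{(m)}$. That completes the argument. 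One may also remark, following the structure of the proof of Theorem \ref{r-main0}, that the measure obtained is of the form $\widetilde\nu - \sum_{k=1}^\ell \delta_{{\bf w}_k}$ with $\widetilde\nu$ a positive measure coming from an invertible (hence consistent) moment matrix, so the finite-sequence version inherits the same Jordan-type $\mu^+-\mu^-$ structure with $\mu^-$ finitely atomic.

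I do not expect a serious obstacle here; the content is entirely in Theorem \ref{r-main0}, and Theorem \ref{r-main1} is a formal corollary. The one point that deserves a sentence of care is the parity/degree bookkeeping: one must note that the truncated moment problem of degree $m$ for odd $m$ is subsumed by the degree-$2n$ problem, since matching moments up to degree $2n \ge m$ in particular matches those up to degree $m$, and the freedom to fill in the higher moments arbitrarily is exactly what makes the reduction lossless. A secondary (cosmetic) point is that Theorem \ref{r-main0} as stated handles $d$-dimensional sequences for every $d\in\iz_+$, so no separate treatment of small $d$ is needed; the finite-sequence statement holds in the same generality. If desired, one can close with the observation that this also answers the finite analogue of the Boas/Flessas--Burton--Whitehead results for every $d$, with an explicit atomic signed measure.

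\begin{proof}
Let $\beta \equiv \beta^{(m)} = \{\beta_{\bf i} : {\bf i}\in\iz_+^d,\ |{\bf i}|\le m\}$ be an arbitrary finite sequence with $\beta_{\bf 0}\ne 0$. Choose $n\in\iz_+$ with $2n\ge m$, and define a degree-$2n$ sequence $\widetilde\beta \equiv \widetilde\beta^{(2n)}$ by $\widetilde\beta_{\bf i} := \beta_{\bf i}$ for $|{\bf i}|\le m$ and $\widetilde\beta_{\bf i} := 0$ for $m < |{\bf i}| \le 2n$. Then $\widetilde\beta_{\bf 0} = \beta_{\bf 0}\ne 0$, so $\widetilde\beta$ is a truncated moment sequence of degree $2n$. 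By Theorem \ref{r-main0}, there is an interpolating measure $\widetilde\mu$ in $\re^d$ for $\widetilde\beta$; that is, $\int \vx^{\bf i}\,d\widetilde\mu(\vx) = \widetilde\beta_{\bf i}$ for all ${\bf i}\in\iz_+^d$ with $|{\bf i}|\le 2n$. In particular, for every ${\bf i}$ with $|{\bf i}|\le m$ we have $\int \vx^{\bf i}\,d\widetilde\mu(\vx) = \widetilde\beta_{\bf i} = \beta_{\bf i}$, so $\widetilde\mu$ is an interpolating measure for $\beta$.
\end{proof}
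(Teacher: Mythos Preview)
Your reduction is exactly the paper's approach: pad the sequence with new parameters up to degree $2n$ so that it fills a full moment matrix, then invoke Theorem~\ref{r-main0}. The paper phrases the padding as ``use new parameters to complete the moment matrix'' and then either obtain invertibility directly or run the rank-raising argument of Theorem~\ref{r-main0}; your choice of setting the extra moments to $0$ is a perfectly good specialization of that.

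There is one small omission. Your formal proof begins ``Let $\beta$ be an arbitrary finite sequence with $\beta_{\bf 0}\ne 0$,'' but the statement of Theorem~\ref{r-main1} is about \emph{any} finite sequence, and the paper explicitly adds a last line to cover the case $\beta_{\bf 0}=0$: one simply replaces the initial moment by a nonzero value (equivalently, one observes that the rank-raising process of Theorem~\ref{r-main0} does not actually need $\beta_{\bf 0}\ne 0$, since each added $P({\bf w}_k)$ contributes a $1$ in the $(0,0)$ entry) and then proceeds as before. You should add that sentence so your proof matches the full scope of the theorem.
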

\begin{proof}
It suffices to cover the cases when  the given sequence is not the type of $\beta^{(2n)}$. Such a sequence cannot fill up the associated moment matrix, so we use new parameters to complete the moment matrix. If it is possible to make the moment matrix invertible, then the extended moment sequence is consistent. Thus, the given sequence has an interpolating measure. 
Otherwise, one can  follow the same process in the proof of Theorem \ref{r-main0} and verify that the sequence admits an interpolating measure.  Lastly, if a sequence begins with zero,  then one need take a new nonzero initial moment and repeat the process used in the above.  
\end{proof}


Before we conclude this note, let us discuss how investigate the location of atoms of an interpolating measure.  In addition, an algorithmic approach to find an explicit formula of a measure will be presented through a concrete example.  
Recall that in the presence of  a (positive) representing measure $\mu$ for  a positive $M_d(n)(\bt)$,  Proposition 3.1 in \cite{tcmp1} states that
$$
\hat p \in \ker M_d(n)(\beta) \iff p({\bf X})=\tb 0 \iff \operatorname{supp}\; \mu  \sbs \mcal Z(p) .
$$
This result provides an evidence that  where the atoms of $\mu$ lie for a singular $M_d(n)$; that is, the algebraic variety of $M_d(n)$ must contain the support of a representing measure. However,  the following example shows such an argument is no longer  valid for the moment problem about an interpolating measure; consider 
\begin{equation}\label{e-m1}
M_2(1)\equiv M_2(1)\left(\beta^{(2)}\right) = \begin{pmatrix}
-1&-16&-4\\
-16&-94&-10\\
-4&-10&2
\end{pmatrix}.
\end{equation} 
Note that $M_2(1)$ has a single column relation 
$X_2=-(4/3) \ti{1}+(1/3)X_1$. Indeed, the sequence can be generated by an interpolating  measure  $\nu= \delta_{(-2,1)}+ \delta_{(-2,-2)} -\delta_{(1,1)}- \delta_{(10,1)}$; but, different from the case for a positive measure, $\supp \nu \not\sbs \mcal Z(x_2+4/3 -(1/3)x_1)=\mcal V_{\bt^{(2)}}$.  In other words, an interpolating measure for the sequence may have atoms outside of the algebraic variety. Nonetheless, one can still  find an interpolating measure supported in the algebraic variety of $M_2(1)$ as follows:

\begin{example} 
We illustrate how to find an interpolating measure of the sequence in (\ref{e-m1}). To find an interpolating measure supported in the algebraic variety of $M_2(1)$, select  a point $\lp a, \frac{a-4}{3}\rp\in \mcal Z(x_2+4/3 -(1/3)x_1)$  for some  $a\in \re$. 
 Using the rank-one decomposition, we write 
\begin{equation}\label{e-rod}
M_2(1) =\widetilde{M_2(1)}+ u \begin{pmatrix}
 1 & a & \frac{a-4}{3} \\
 a & a^2 &\frac{a(a-4)}{3}\\
\frac{a-4}{3}& \frac{a(a-4)}{3}& \frac{(a-4)^2}{9}\\
\end{pmatrix}
\end{equation}
for some $u\in\re$.  
Note that $\rank M_2(1)=2$ and we are attracted to guess that a minimal interpolating measure is 2-atomic (cf. Lemma \ref{r-ran} and \ref{r-rann}).  
In order to find such a  measure, we impose a condition that $\rank \widetilde{M_2(1)} =1$; a calculation shows  $\rank \widetilde{M_2(1)}=1$ if and only if $u=162/(a^2-32 a+94)$.  If we take $u=162/(a^2-32 a+94)$, then 
\begin{eqnarray*}
{M}_2(1)=
\frac{-(a-16)^2}{a^2-32 a+94}
\left(
\begin{array}{ccc}
 1 & \frac{2 (8 a-47)}{a-16} & \frac{2 (2 a-5)}{a-16} \\
 \frac{2 (8 a-47)}{a-16} & \frac{4 (8 a-47)^2}{(a-16)^2} & \frac{4 (2 a-5) (8 a-47)}{(a-16)^2} \\
 \frac{2 (2 a-5)}{a-16} & \frac{4 (2 a-5) (8 a-47)}{(a-16)^2} & \frac{4 (2 a-5)^2}{(a-16)^2} \\
\end{array}
\right)  \\
+\frac{162}{a^2-32 a+94}\begin{pmatrix}
 1 & a & \frac{a-4}{3} \\
 a & a^2 &\frac{a(a-4)}{3}\\
\frac{a-4}{3}& \frac{a(a-4)}{3}& \frac{(a-4)^2}{9}\\
\end{pmatrix}.
\end{eqnarray*}
Therefore, we get an  interpolating measure $\mu= \frac{-(a-16)^2}{a^2-32 a+94}\delta_{\lp \frac{2 (8 a-47)}{a-16} ,  \frac{2 (2 a-5)}{a-16}  \rp}+\frac{162}{a^2-32 a+94}\delta_{\lp a, \frac{a-4}{3} \rp}$ (with $a^2-32 a+94\neq 0$ and $a\neq 16$), which is supported in $\mcal V_{M_2(1)}$.  

\end{example}

\begin{example}\label{example:perturbation}
Consider a truncated moment sequence $\beta^{(4)}$:
\begin{align*}
\ \ &\bt_{00}=6,  &&\bt_{10}=6, &&\bt_{01}=20, &&\bt_{20}=18, &&\bt_{11}=16, \qquad \bt_{02}=68, \\ 
\ \ &\bt_{30}=30, &&\bt_{21}=56,  &&\bt_{12}=40, &&\bt_{03}=236,&&\\
\ \ &\bt_{40}=66, &&\bt_{31}=88, &&\bt_{22}=176,&& \bt_{13}=88, &&\bt_{04}=836.
\end{align*}
Construct its moment matrix as follows: 
\begin{equation*}
\widetilde{M_2(2)}
=
\begin{pmatrix}
6 & 6 & 20 & 18 & 16 & 68 \\
6 & 18 & 16 & 30 & 56 & 40\\
20 & 16 & 68 & 56 & 40 & 236\\
18 & 30 & 56 & 66 & 88 & 176\\
16 & 56 & 40 & 88 & 176 & 88\\
68 & 40 & 236 & 176 & 88 & 836
\end{pmatrix}
\end{equation*}
It is easy to check that the representing measure is $\mu= 2\delta_{(-1,4)} + 4 \delta_{(2,3)}$.
Assume that  
for sufficiently small perturbation we have 
\begin{center}\scalebox{0.88}{$
M_2(2)
=
\begin{pmatrix}
5.990000 &	5.995000 &	19.998000 &17.997500 &	15.999000 & 67.999600\\
5.995000 &	17.997500 &	15.999000 &	29.998750 &	55.999500 &	39.999800\\
19.998000 &	15.999000 & 67.999600 & 	55.999500 &	39.999800 & 235.999920 \\
17.997500 & 29.998750 & 	55.999500 &	65.999375 &	87.999750 &	175.999900\\
15.999000 &	55.999500 &	39.999800 &	87.999750 &	175.999900 &	87.999960\\
67.999600 &	39.999800 & 235.999920 &175.999900 &	87.999960 &	835.999984 
\end{pmatrix}$,}
\end{center}
which is not positive semidefinite. So, arbitrarily small perturbations of a given sequence eject one from the cone of positive semidefinite matrices.  As a result, this sequence does not have a representing measure. 
Instead, one can find  interpolating measures for the sequence.  Concretely, one of them is  $\mu= -0.01\delta_{(0.5,0.2)} + 2\delta_{(-1,4)} + 4 \delta_{(2,3)}$; here the first term with the negative density can be considered as noise. 
\end{example}

Removing noise from the original data is a challenging problem in many different fields. Moment sequences need to be modified since data obtained from physical experiments and phenomena are often corrupt or incomplete. By Theorem \ref{r-main1}, 
one can find an interpolating measure $\mu$ for the given data, which is $\mu= \mu^{+}  -\mu^{-}$ of two positive measures $\mu^{+}$ and $\mu^{-}$. 
Assuming that $\mu^{-}$ is generated by the distribution of noise,  $\mu^{+}$ can be a measure for the denosing data in a sense.

Finally, we will see Boas' result introduced earlier can be extended to any infinite multi-index sequences. It suffices to show that the claim holds for the case of double-index sequences because the argument in the proof also is also valid for any  multi-index sequences.  

\begin{theorem} Any bivariate (real) full moment sequence has an interpolating measure. 
\end{theorem}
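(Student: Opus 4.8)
The plan is to extend R.~P.~Boas' idea \cite{Boa} to two variables by assembling the interpolating measure as an infinite sum of finitely atomic ``correction'' measures whose supports escape to infinity, so that all but finitely many of them contribute nothing to any fixed moment. As already observed, it suffices to treat $d=2$; so let $\bt=\q\bt_\vi:\vi\in\iz_+^2\w$ be an arbitrary bivariate full sequence, and we look for a (locally finite, signed) Borel measure $\mu$ on $\re^2$ with $\bt_\vi=\int\vx^\vi\,d\mu$ for every $\vi$. We may assume $\bt_{\mathbf 0}\neq0$ after, if necessary, replacing the initial moment as in the proof of Theorem \ref{r-main1}.

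First I would run a degree-by-degree induction producing finitely atomic signed measures $\sigma_0,\sigma_1,\sigma_2,\ldots$ with $\int\vx^\vi\,d\sigma_n=\bt_\vi$ for all $|\vi|\le n$. Take $\sigma_0:=\bt_{\mathbf 0}\,\delta_{(1,1)}$, which matches the degree-$0$ moment. Given $\sigma_n$, I would add a finitely atomic signed measure $\tau_{n+1}$ supported on a set $L_{n+1}\sbs\q\vx\in\re^2:|\vx|>R_{n+1}\w$ that is \emph{unisolvent} for $\mcal P_{n+1}$ --- concretely a principal lattice $\q(i,j):i+j\le n+1\w$ scaled and translated far out into the open first quadrant (scaling and translation preserve unisolvence, since they compose the associated generalized Vandermonde matrix with an invertible change of monomial basis). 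Unisolvence makes the (square) linear system in the densities of $\tau_{n+1}$ uniquely solvable, so we may require $\int\vx^\vi\,d\tau_{n+1}=0$ for $|\vi|\le n$ and $\int\vx^\vi\,d\tau_{n+1}=\bt_\vi-\int\vx^\vi\,d\sigma_n$ for $|\vi|=n+1$; then $\sigma_{n+1}:=\sigma_n+\tau_{n+1}$ matches all moments of degree $\le n+1$. (This is the rank-one/consistency mechanism of Theorem \ref{r-main0}, applied one degree at a time.) Set $\mu:=\sigma_0+\sum_{m\ge1}\tau_m$.

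The one genuine obstacle is choosing the radii $R_m\uparrow\infty$ fast enough that $\mu$ is a bona fide measure against which every monomial is integrable, legitimizing termwise integration. The key cancellation is that $\tau_m$ was built to annihilate every moment of degree $<m$, so $\int\vx^\vi\,d\tau_m=0$ whenever $m>|\vi|$; hence for each fixed $\vi$ only the terms $m\le|\vi|$ are nonzero and formally $\int\vx^\vi\,d\mu=\int\vx^\vi\,d\sigma_{|\vi|}=\bt_\vi$. To justify this I would bound the densities of $\tau_m$: the norm of the inverse of the generalized Vandermonde matrix of the \emph{unscaled} lattice depends only on $m$, and the right-hand side is known once $\sigma_{m-1}$ is fixed, so each density is $\le K_m D_m R_m^{-m}$ with $K_m$ $R_m$-independent; consequently $\int|\vx^\vi|\,d|\tau_m|\le C(\vi,m)\,R_m^{|\vi|-m}$. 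Choosing $R_m$ at stage $m$ large enough to satisfy the finitely many conditions $R_m\ge 2^m\max_{|\vi|<m}C(\vi,m)$ forces $\int|\vx^\vi|\,d|\tau_m|\le 2^{-m}$ for all $m>|\vi|$. Since the supports of the $\tau_m$ escape to infinity, only finitely many of them meet any ball, so $\mu$ is automatically locally finite; and the estimate above shows each $\vx^\vi$ is $\mu$-integrable with $\int\vx^\vi\,d\mu=\bt_\vi$, as desired. Finally, the same construction with $\iz_+^2$ replaced by $\iz_+^d$ (principal lattices in $\re^d$ are still unisolvent) proves the statement in every dimension, and since the correction atoms lie in the open first quadrant one in fact obtains an interpolating measure with a Boas-type support restriction.
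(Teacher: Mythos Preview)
Your construction is sound and takes a route genuinely different from the paper's. The paper proceeds by asserting a factorization $\beta_{ij}=u_i v_j$, applying Boas' one-variable theorem to each of $\{u_i\}$ and $\{v_j\}$, and then forming the signed product measure $(\mu_+-\mu_-)\times(\nu_+-\nu_-)$ via Fubini. That argument is short, but the factorization step fails for generic bivariate sequences (e.g.\ $\beta_{00}=\beta_{10}=\beta_{01}=1$, $\beta_{11}=2$ admits no such $u_i,v_j$), so as written the paper's proof has a gap that your proposal actually avoids. You instead carry out the two-variable analogue of Boas' own construction: correct the moments one degree at a time with finitely atomic $\tau_m$ supported on scaled principal lattices, arranged so that $\tau_m$ annihilates every moment of lower degree; the Vandermonde scaling $V=\operatorname{diag}(R_m^{|\alpha|})\,V_0$ then gives density bounds of order $R_m^{-m}$, and the inductive choice of $R_m$ (made after $\sigma_{m-1}$ is fixed, so no circularity) forces $\int|\vx^{\vi}|\,d|\tau_m|\le 2^{-m}$ for $m>|\vi|$. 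Two small points worth tightening in a final write-up: make explicit that the nodes are $R_m q_k$ for a \emph{fixed} translated configuration $\{q_k\}$ in the open first quadrant, so that $V_0$ is genuinely independent of $R_m$; and note that the limit $\mu$ is only guaranteed to be a locally finite real-valued measure (both $\mu^{\pm}$ may be infinite), which is exactly Boas' original setting of functions of locally bounded variation and suffices for all the moment integrals, though it is marginally weaker than the paper's phrasing via the Jordan decomposition.
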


\begin{proof} Let $\q \bt_{ij} \w_{i,j=0}^\infty $ be an infinite moment sequence. Then we may write $\bt_{ij} = u_i v_j$ for some $u_i , \ v_j\in \re$. Then, by Boas' result, there are signed measures $\mu$ and $\nu$ supported on $[0,\infty)$ such that 
\begin{align*}
u_i = \int_0^\infty x^i \, d\mu \qquad  \text{and} \qquad  v_j = \int_0^\infty y^j \, d\nu.
\end{align*}
It follows from the Jordan decomposition theorem that any signed measure can be written as a difference of two positive measure; that is, 
\begin{align*}
\mu = \mu_+ - \mu_- \qquad  \text{and} \qquad  \nu = \nu_+ - \nu_-,
\end{align*}
where   $\mu_+$, $ \mu_-$, $\nu_+ $, and $\nu_-$ are positive measures supported on  $[0,\infty)$. Now, 
\begin{align*}
\bt_{ij} = u_i  v_j &=  \int_0^\infty x^i \, d\mu  \int_0^\infty y^j \, d\nu \\
&= \lb  \int_0^\infty x^i \, d( \mu_+ - \mu_-)  \rb \lb \int_0^\infty y^j \, d( \nu_+ - \nu_-) \rb\\
&= \lp  \int_0^\infty x^i \, d \mu_+ - \int_0^\infty x^i \, d\mu_-  \rp  \lp \int_0^\infty y^j \, d \nu_+ -\int_0^\infty y^j \, d \nu_- \rp\\ 
&= \lp \int_0^\infty x^i \, d \mu_+ \int_0^\infty y^j \, d \nu_+ +  \int_0^\infty x^i \, d \mu_- \int_0^\infty y^j \, d \nu_- \rp  \\
& \qquad \qquad  - \lp \int_0^\infty x^i \, d \mu_+ \int_0^\infty y^j \, d \nu_- +  \int_0^\infty x^i \, d \mu_- \int_0^\infty y^j \, d \nu_+ \rp   \\
&=\int_0^\infty  \int_0^\infty  x^i  y^j \, d( \mu_+ \times \nu_+)  +  \int_0^\infty  \int_0^\infty  x^i  y^j \, d( \mu_- \times \nu_-)  \\
& \qquad \qquad  - \int_0^\infty  \int_0^\infty  x^i  y^j \, d( \mu_+ \times \nu_-) - \int_0^\infty  \int_0^\infty  x^i  y^j \, d( \mu_- \times \nu_+)\\
&=\int_0^\infty  \int_0^\infty  x^i  y^j \, d \tau ,
  \end{align*}
where $\tau :=  \mu_+ \times \nu_+ +  \mu_- \times \nu_- -  \mu_+ \times \nu_- -  \mu_- \times \nu_+$ is a signed measure. Note that the second last identity in the above is true since $\mu$ and $\nu$ satisfy the hypothesis in the Fubini's theorem. Indeed, observe that $u_0 =  \int_0^\infty  \, d\mu = \mu([0,\infty)) \in \re   $ and $v_0 =  \int_0^\infty  \, d\nu = \nu([0,\infty)) \in \re  $; thus, $\mu$ and $\nu$ are finite measures, which means the two measure are $\sigma$-finite. Also, $x^i y^j$ are nonnegative on $[0,\infty)\times [0,\infty)$ for any $i, j \in \nn_0$.  
Therefore,  $\q \bt_{ij} \w$  has an interpolating measure $\tau.$ 
\end{proof}

\bsk

\ti{Acknowledgment.} 
The authors are indebted to Professor Ilwoo Cho and Professor Ra\'ul Curto for several discussions that lead a better presentation of this note.    


\begin{thebibliography}{99}


\bibitem{BaTe} C. Bayer and J. Teichmann, The proof of Tchakaloff's Theorem, \textit{Proc. Amer. Math. Soc.} 134 (2006), 3035--3040.

\bibitem{Boa} R. P. Boas,  Jr., The Stieltjes moment problem for functions of bounded variation, \textit{Bull. Amer. Math. Soc.} 45 (1939), no. 6, 399--404.



\bibitem{CF91} R. Curto and L. Fialkow, A. Recursiveness, positivity, and truncated moment problems, \textit{Houston J. Math.}  17 (1991), no. 4, 603--635. 



\bibitem{tcmp1} R. Curto and L. Fialkow, Solution of the truncated complex moment problem for flat data, \textit{Mem. Amer. Math. Soc.} 119 (1996), no. 568, x+52 pp.






\bibitem{tcmp11} R. Curto, L. Fialkow and H.M. M\"{o}ller, The extremal truncated moment problem, \textit{Integral Equations Operator Theory} 60 (2008), 177--200.





\bibitem{CuYoo1} {R. Curto and S. Yoo}, Cubic column relations in truncated moment problems, \textit{J. Funct. Anal.} 266 (2014), no. 3, 1611--1626.










\bibitem{Fia08} L. Fialkow, Truncated multivariable moment problems with finite variety,  \textit{J. Operator Theory} 60 (2008), 343--377.


\bibitem{Fia11} {L. Fialkow},    {Solution of the truncated moment problem with variety $y=x^3$},    \textit{Trans. Amer. Math. Soc.} {363} (2011),   {3133--3165}


\bibitem{FBW} G. P. Flessas, W. K. Burton, and R. R. Whitehead, On the moment problem for nonpositive distributions, \textit{J. Phys. A}  15  (1982),  no. 10, 3119--3130.


\bibitem{GoMe} G. H. Golub and G.  Meurant, \ti{Matrices, moments and quadrature with applications},  
Princeton Series in Applied Mathematics. Princeton University Press, Princeton, NJ, 2010. xii+363 pp. 



\bibitem{HoJo} R. Horn and C. Johnson,
\textit{Matrix Analysis},  Cambridge University Press, 2013.

\bibitem{KoRe} O. Kounchev and H. Render, A moment problem for pseudo-positive definite functionals, \ti{Ark. Mat.} 48 (2010), no. 1, 97--120. 










\bibitem{Yoo17i} S. Yoo, Sextic moment problems with a reducible cubic column relation,  \textit{Integral Equations Operator Theory} 88 (2017), no. 1, 45–63. 


\bibitem{Wol} Wolfram Research, Inc., \textit{Mathematica}, Version 11.0.1.0, Champaign, IL, 2016.
\end{thebibliography}


\end{document}